\documentclass[12pt,reqno]{amsart}

\usepackage[latin1]{inputenc}
\usepackage{amsmath}
\usepackage{amsfonts}
\usepackage{amssymb}
\usepackage{graphics}
\usepackage{enumerate}
\usepackage{amssymb,amsmath,amsthm,amscd,latexsym,verbatim,graphicx,amsfonts,float}

\topmargin -0.5in
\textheight 9.0in
\oddsidemargin -0.0in
\evensidemargin -0.0in
\textwidth 6.5in

\usepackage{amscd}
\usepackage{amsmath}
\usepackage{amssymb}
\usepackage{amsthm}
\usepackage{latexsym}
\usepackage{verbatim}


\theoremstyle{plain}
\newtheorem{theorem}{Theorem}[section]

\newtheorem{corollary}[theorem]{Corollary}

\newtheorem{prop}[theorem]{Proposition}
\theoremstyle{definition}
\newtheorem{conj}[theorem]{Conjecture}

\newtheorem{pr}[theorem]{Problem}
\theoremstyle{remark}

\newcommand{\bbD}{\mathbb{D}}

\newcommand{\bbN}{\mathbb{N}}

\newcommand{\mcp}{\mathcal{P}}

\title[]{New Perspectives on Torsional Rigidity and Polynomial Approximations of z-bar}
\author[]{Adam Kraus $\&$ Brian Simanek}
\date{}

\begin{document}
\maketitle

\begin{abstract}
We consider polynomial approximations of $\bar{z}$ to better understand the torsional rigidity of polygons.  Our main focus is on low degree approximations and associated extremal problems that are analogous to P\'olya's conjecture for torsional rigidity of polygons.  We also present some numerics in support of P\'{o}lya's Conjecture on the torsional rigidity of pentagons.
\end{abstract}

\vspace{4mm}

\footnotesize\noindent\textbf{Keywords:} Torsional Rigidity, Bergman Analytic Content, Symmetrization

\vspace{2mm}

\noindent\textbf{Mathematics Subject Classification:} Primary 41A10; Secondary 31A35, 74P10

\vspace{2mm}

\normalsize

\section{Introduction}\label{Intro}

Let $\Omega\subseteq\mathbb{C}$ be a bounded and simply connected region whose boundary is a Jordan curve.  We will study the  \textit{torsional rigidity} of $\Omega$ (denoted $\rho(\Omega)$), which is motivated by engineering problems about a cylindrical beam with cross-section $\Omega$.   One can formulate this quantity mathematically for simply connected regions by the following variational formula of Hadamard type
\begin{align}\label{rhodef}
\rho(\Omega):=\sup_{u\in C^1_0(\bar{\Omega})}\frac{4\left(\int_{\Omega}u(z)dA(z)\right)^2}{\int_{\Omega}|\nabla u(z)|^2dA(z)},
\end{align}
where $dA$ denotes area measure on $\Omega$ and $C^1_0(\bar{\Omega})$ denotes the set of all continuously differentiable functions on $\Omega$ that vanish on the boundary of $\Omega$ (see \cite{PS} and also \cite{BFL,Makai}).  The following basic facts are well known and easy to verify:
\begin{itemize}
\item for any $c\in\mathbb{C}$, $\rho(\Omega+c)=\rho(\Omega)$,
\item for any $r\in\mathbb{C}$, $\rho(r\Omega)=|r|^4\rho(\Omega)$,
\item if $\Omega_1$ and $\Omega_2$ are simply connected and $\Omega_1\subseteq\Omega_2$, then $\rho(\Omega_1)\leq\rho(\Omega_2)$,
\item if $\bbD=\{z:|z|<1\}$, then $\rho(\bbD)=\pi/2$.
\end{itemize}

There are many methods one can use to estimate the torsional rigidity of the region $\Omega$ (see \cite{Mush,Sok}).  For example, one can use the Taylor coefficients for a conformal bijection between the unit disk and the region (see \cite[pages 115 $\&$ 120]{PS} and \cite[Section 81]{Sok}), the Dirichlet spectrum for the region (see \cite[page 106]{PS}), or the expected lifetime of a Brownian Motion (see \cite[Equations 1.8 and 1.11]{BVdbC} and \cite{HMP}).  These methods are difficult to apply in general because the necessary information is rarely available.

More recently, Lundberg et al. proved that since $\Omega$ is simply connected, it holds that
\begin{equation}\label{fzapp}
\rho(\Omega)=\inf_{f\in A^2(\Omega)}\int_{\Omega}|\bar{z}-f|^2dA(z),
\end{equation}
where $A^2(\Omega)\subseteq L^2(\Omega,dA)$ is the Bergman space of $\Omega$ (see \cite{FL17}). The right-hand side of \eqref{fzapp} is the square of the Bergman analytic content of $\Omega$, which is the distance from $\bar{z}$ to $A^2(\Omega)$ in $L^2(\Omega,dA)$.  This formula was subsequently used extensively in \cite{FS19} to calculate the approximate torsional rigidity of various regions.  To understand their calculations, let $\{p_n\}_{n=0}^{\infty}$ be the sequence of Bergman orthonormal polynomials, which are orthonormal in $A^2(\Omega)$.  By \cite[Theorem 2]{Farrell} we know that $\{p_n(z;\Omega)\}_{n\geq0}$ is an orthonormal basis for $A^2(\Omega)$ (because $\Omega$ is a Jordan domain) and hence
\[
\rho(\Omega)= \int_{\Omega}|z|^2dA-\sum_{n=0}^{\infty}|\langle1,wp_n(w)\rangle|^2
\]
(see \cite{FS19}).  Thus, one can approximate $\rho(\Omega)$ by calculating
\[
\rho_N(\Omega):=\int_{\Omega}|z|^2dA-\sum_{n=0}^{N}|\langle1,wp_n(w)\rangle|^2
\]
for some finite $N\in\bbN$.  Let us use $\mcp_n$ to denote the space of polynomials of degree at most $n$.  Notice that $\rho_N(\Omega)$ is the square of the distance from $\bar{z}$ to $\mcp_N$ in $L^2(\Omega,dA)$.  For this reason, and in analogy with the terminology of Bergman analytic content, we shall say that $\mbox{dist}(\bar{z},\mcp_N)$ is the \textit{Bergman $N$-polynomial content} of the region $\Omega$.  The calculation of $\rho_n(\Omega)$ is a manageable task in many applications, as was demonstrated in \cite{FS19}.  One very useful fact is that $\rho_N(\Omega)\geq\rho(\Omega)$, so these approximations are always overestimates (a fact that was also exploited in \cite{FS19}).

Much of the research around torsional rigidity of planar domains focuses on extremal problems and the search for maximizers under various constraints.  For example, Saint-Venant conjectured that among all simply connected Jordan regions with area $1$, the disk has the largest torsional rigidity.  This conjecture has since been proven and is now known as Saint-Venant's inequality (see \cite{Pol}, \cite[page 121]{PS}, and also \cite{BFL,OR}).  It has been conjectured that the $n$-gon with area $1$ having maximal torsional rigidity is the regular $n$-gon (see \cite{Pol}).  This conjecture remains unproven for $n\geq5$.  It was also conjectured in \cite{FS19} that among all right triangles with area $1$, the one that maximizes torsional rigidity is the isosceles right triangle.  This was later proven in a more general form by Solynin in \cite{Sol20}.  Additional results related to optimization of torsional rigidity can be found in \cite{Lipton,SZ,vdBBV}.

The formula \eqref{fzapp} tells us that maximizing $\rho$ within a certain class of Jordan domains means finding a domain on which $\bar{z}$ is not well approximable by analytic functions (see \cite{FK}).  This suggests that the Schwarz function of a curve is a relevant object.  For example, on the real line, $f(z)=z$ satisfies $f(z)=\bar{z}$ and hence we can expect that any region that is always very close to the real line will have small torsional rigidity.  Similar reasoning can be applied to other examples and one can interpret \eqref{fzapp} as a statement relating the torsional rigidity of $\Omega$ to a similarity between $\Omega$ and an analytic curve with a Schwarz function.  Some of the results from \cite{Makai} are easily understood by this reasoning.

The quantities $\rho_N$, defined above, suggest an entirely new class of extremal problems related to torsional rigidity.  In this vein, we formulate the following conjecture, which generalizes P\'{o}lya's conjecture:
\begin{conj}\label{rhon}
For an $n,N\in\bbN$ with $n\geq3$, the convex $n$-gon of area $1$ that maximizes the Bergman $N$-polynomial content is the regular $n$-gon.
\end{conj}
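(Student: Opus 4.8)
The object to be maximized over convex $n$-gons $\Omega$ of area $1$ is $\rho_N(\Omega)$. First observe that $N=0$ is degenerate: $\rho_0(\Omega)=\int_\Omega|z-c|^2\,dA$, with $c$ the centroid, is the polar moment of inertia, which is unbounded over long thin $n$-gons, so there is no maximizer and one must take $N\ge1$. I would begin with $N=1$. Translating the centroid of $\Omega$ to the origin, the best approximant to $\bar z$ in $\mcp_1$ has vanishing constant term, and minimizing over the linear coefficient gives
\begin{equation*}
\rho_1(\Omega)=M-\frac{|Q|^2}{M}=\frac{4\det T}{\operatorname{tr}T},\qquad M:=\int_\Omega|z|^2\,dA,\quad Q:=\int_\Omega z^2\,dA,
\end{equation*}
where $T=\left(\begin{smallmatrix}\int_\Omega x^2\,dA & \int_\Omega xy\,dA\\ \int_\Omega xy\,dA & \int_\Omega y^2\,dA\end{smallmatrix}\right)$ is the centered inertia tensor, so that $\operatorname{tr}T=M$ and $4\det T=M^2-|Q|^2$.

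For $n=3$ this settles the conjecture, uniquely and sharply. Because $\det T$ is invariant under area-preserving linear maps and all triangles are affinely equivalent, $\det T=(\text{area})^4/108$ for every triangle; the constant comes from the Cauchy--Binet identity $\det\!\bigl(\sum_k e_ke_k^{\mathsf T}\bigr)=\sum_{j<k}(e_j\wedge e_k)^2$ for the three edge vectors $e_1,e_2,e_3$, each pair of which has $|e_j\wedge e_k|=2\cdot\text{area}$. Hence, for a triangle of area $1$ with side lengths $a_1,a_2,a_3$,
\begin{equation*}
\rho_1(\Omega)=\frac{4\cdot\tfrac1{108}}{\tfrac1{36}\,(a_1^2+a_2^2+a_3^2)}=\frac{4}{3\,(a_1^2+a_2^2+a_3^2)},
\end{equation*}
since the polar moment $\operatorname{tr}T$ of a triangle equals $\tfrac1{36}\sum_i a_i^2$ times its area. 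This is maximized exactly when $\sum_i a_i^2$ is least, i.e.\ by the equilateral triangle (a classical uniqueness statement), the maximal value being $\tfrac{\sqrt3}{9}$.

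For $n\ge4$ the literal assertion needs care. A convex $n$-gon of area $1$ can be deformed, through convex $n$-gons, to the equilateral triangle of area $1$ -- take a shrinking equilateral triangle with a vanishing sliver appended -- and $\rho_N$ is continuous under such deformations; but for $N=1$ the equilateral triangle gives $\rho_1=\tfrac{\sqrt3}{9}$, whereas the regular $n$-gon gives $\rho_1$ equal to its own polar moment, which is strictly less than $\tfrac{\sqrt3}{9}$ for every $n\ge4$. Thus the regular $n$-gon is not a maximizer of $\rho_1$ when $n\ge4$. This is a genuinely low-degree effect: for regular polygons $\rho_1$ decreases as the number of sides grows, whereas $\rho=\rho_\infty$ increases, so the ordering should reverse once $N$ is large. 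I would therefore read Conjecture \ref{rhon} as intended for $N$ large relative to $n$, where heuristically $\rho_N$ is uniformly close to $\rho$ on the class of convex $n$-gons, so that P\'olya's inequality $\rho(\text{regular }n\text{-gon})\ge\rho(\Omega)$ (proved for $n\le4$) together with the monotonicity of $\rho$ in the number of sides governs the problem; identifying the threshold in $N$ beyond which the regular $n$-gon becomes optimal is itself an attractive subproblem.

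Granting such a formulation, my plan for $n\ge4$ and large $N$ is to imitate the symmetrization proof of Saint-Venant's inequality: find an area-preserving deformation that keeps $n$-gons among $n$-gons (or profitably reduces to a lower-complexity subcase, as Steiner symmetrization does for triangles), never decreases $\rho_N$, and is stable only at the regular $n$-gon, then iterate. The crux -- and what I expect to be the principal obstacle -- is that the deformation driving the torsional-rigidity theory fails here: Steiner symmetrization increases $\rho$ but can strictly decrease $\rho_N$; for $N=1$ it fixes one principal moment of $T$ and strictly decreases the other, hence strictly decreases $\tfrac{4\det T}{\operatorname{tr}T}$. One would then need a symmetrization tailored to the Bergman $N$-polynomial content, or a hybrid with area-preserving linear maps (which act on $\mcp_N$ only through rotations and dilations, a real restriction), or a direct variational analysis from the explicit vertex formulas for the moments in $\rho_N(\Omega)=\int_\Omega|z|^2\,dA-\sum_{k=0}^N|\langle 1,wp_k(w)\rangle|^2$, studying the first-order conditions under motion of a single vertex subject to the area and convexity constraints. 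Even so, for $n\ge5$ the problem is presumably no easier than P\'olya's own conjecture, which remains open; so in practice I would establish $N=1$, $n=3$ outright, carry the $\det T$ analysis through for small $n$ to pin down exactly where the degeneration obstruction bites, and regard the general large-$N$ case as requiring a genuinely new monotonicity mechanism for $\rho_N$.
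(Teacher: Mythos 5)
This statement is a \emph{conjecture}; the paper offers no proof of it, only supporting evidence (the triangle case via Corollaries \ref{equimax1} and \ref{equimax2}, and the non-convex counterexamples of Theorem \ref{nomax}), so there is no ``paper's own proof'' to compare against and your submission is rightly part proof-of-special-cases, part program. The parts you actually prove are correct and proceed differently from the paper. Your identity $\rho_1(\Omega)=4\det T/\operatorname{tr}T$ agrees with \eqref{rho1form}, and your affine-invariance argument --- $\det T$ is invariant under area-preserving linear maps, all triangles are affinely equivalent, hence $\det T=(\mathrm{area})^4/108$ and $\rho_1=4/(3(a_1^2+a_2^2+a_3^2))$ for area-one triangles --- gives a cleaner, coordinate-free proof of Corollary \ref{equimax1} than the one-parameter differentiation in Theorem \ref{steinertri}; I checked the constants ($\det T=1/1728$ for the unit right triangle, $\operatorname{tr}T=\tfrac{A}{36}\sum a_i^2$) and they are right.

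The substantive issue is your degeneration observation, which as far as I can verify actually \emph{refutes} Conjecture \ref{rhon} as stated for $N=1$ and every $n\geq 4$, and the authors should address it. Any centered region with $k$-fold rotational symmetry, $k\geq 3$, has $\int_\Omega z\,dA=\int_\Omega z^2\,dA=0$, so its best approximant to $\bar z$ in $\mcp_1$ is the zero polynomial and $\rho_1$ equals its polar moment of inertia: $\sqrt{3}/9\approx 0.1925$ for the equilateral triangle of area $1$, $1/6\approx 0.1667$ for the square, and smaller still for regular $n$-gons with $n\geq 5$. Truncating the corners of the equilateral triangle slightly and renormalizing the area produces genuine convex $n$-gons ($n\geq4$) whose moments, and hence whose $\rho_1$, are arbitrarily close to $\sqrt{3}/9>1/6$; so the regular $n$-gon does not maximize $\rho_1$ among convex $n$-gons of area $1$ for any $n\geq 4$. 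Theorem \ref{nomax} rules out only non-convex pathologies, and this convex degeneration toward a lower-order polygon is not excluded by the conjecture's hypotheses; some restriction (e.g.\ $N$ large relative to $n$, as you propose) seems necessary. One error to flag: your parenthetical claim that Steiner symmetrization ``fixes one principal moment of $T$ and strictly decreases the other, hence strictly decreases $4\det T/\operatorname{tr}T$'' holds only when the symmetrization axis is already a principal axis (so $I_{1,1}=0$); otherwise the elimination of the cross term $I_{1,1}^2$ can increase $\det T$, and indeed Corollary \ref{equimax1} asserts that Steiner symmetrization of a triangle parallel to a side \emph{increases} $\rho_1$. Finally, your plan for $n\geq 5$ and large $N$ is reasonable but is not a proof, and, as you acknowledge, that case subsumes P\'olya's still-open conjecture.
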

\noindent We will see by example why we need to include convexity in the hypotheses of this conjecture (see Theorem \ref{nomax} below).

The most common approach to proving conjectures of the kind we have mentioned is through symmetrization.  Indeed, one can prove P\'{o}lya's Conjecture and the St. Venant Conjecture through the use of Steiner symmetrization.  This process chooses a  line $\ell$ and then replaces the intersection of $\Omega$ with every perpendicular $\ell'$ to $\ell$ by a line segment contained in $\ell'$, centered on $\ell$, and having length equal to the $1$-dimensional Lebesgue measure of $\ell'\cap\Omega$.  This procedure results in a new region $\Omega'$ with $\rho(\Omega')\geq\rho(\Omega)$.  Applications of this method and other symmetrization methods to torsional rigidity can be found in \cite{Sol20}.

\smallskip

The rest of the paper presents significant evidence in support of Conjecture \ref{rhon} and also the P\'{o}lya Conjecture for $n=5$.  The next section will explain the reasoning behind Conjecture \ref{rhon} by showing that many optimizers of $\rho_N$ exhibit as much symmetry as possible, though we will see that Steiner symmetrization does not effect $\rho_N$ the same way it effects $\rho$.  In Section \ref{pentnum}, we will present numerical evidence in support of P\'{o}lya's Conjecture for pentagons by showing that among all equilateral pentagons with area $1$, the one with maximal torsional rigidity must be very nearly the regular one.

\section{New Conjectures and Results}\label{new}


Let $\Omega$ be a simply connected Jordan region in the complex plane (or the $xy$-plane).  Our first conjecture asserts that there is an important difference between the Bergman analytic content and the Bergman $N$-polynomial content.  We state it as follows.

\begin{conj}\label{nomaxn}
For each $N\in\bbN$, there is an $n\in\bbN$ so that among all $n$-gons with area $1$, $\rho_N$ has no maximizer.
\end{conj}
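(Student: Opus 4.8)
The plan is to prove the conjecture in a strong, uniform form: for \emph{every} $N\in\bbN$ the choice $n=6$ works (and $n=3$ works when $N=0$). The idea is to exhibit a one-parameter family of non-convex hexagons of area $1$ along which $\rho_N\to\infty$; the shapes are long thin ``L''s, which have tiny torsional rigidity but enormous Bergman $N$-polynomial content, because a polynomial of degree $\le N$ cannot be simultaneously close to $\bar z$ on two long thin segments whose underlying lines are non-parallel — on such a segment $\bar z$ is a linear function of $z$ (the Schwarz function of the line), and a low degree polynomial cannot equal two different linear functions. This is also the mechanism that forces the convexity hypothesis in Conjecture~\ref{rhon}.

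For a large parameter $L$ set $\Omega_L:=\big([0,L]\times[0,w]\big)\cup\big([0,w]\times[0,L]\big)$, a hexagon, with $w=w(L)=L-\sqrt{L^2-1}\sim 1/(2L)$ chosen so that $|\Omega_L|=1$. First I would dispose of $N=0$: since $\mcp_0=\bbC$, $\rho_0(\Omega)$ is the polar moment of inertia $\int_\Omega|z-z_0|^2\,dA$ about the centroid $z_0$, and this already fails to have a maximizer among triangles of area $1$ (take a triangle with a short base and long altitude). For general $N$ the main step is a rescaling. Since the crude choice $p=z$ gives $\rho_N(\Omega_L)\le\int_{\Omega_L}|\bar z-z|^2\,dA=O(L^2)$, any near-optimal $p\in\mcp_N$ must have coefficients $d_k$ with $|d_k|=O(L^{1-k})$ — look at a single horizontal slice, on which $x\mapsto p(x+iy)$ has $L^2([0,L])$-norm $O(L^{3/2})$, and read off the coefficients after rescaling $x=Ls$ — so $p(z)=L\,Q(z/L)$ for some $Q\in\mcp_N$ with coefficients bounded in terms of $N$ alone. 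Rescaling each leg to unit length, using $\bar z=z+O(w)$ on the horizontal leg and $\bar z=-z+O(w)$ on the vertical one, and estimating the area-$O(w^2)$ corner square trivially, one obtains
\[
\rho_N(\Omega_L)=wL^3\,m_N+O(1),\qquad m_N:=\inf_{Q\in\mcp_N}\left(\int_0^1\!|Q(s)-s|^2\,ds+\int_0^1\!|Q(is)+is|^2\,ds\right).
\]
Because $wL^3\sim L^2/2$, it remains only to observe that $m_N>0$: were $m_N=0$ there would be a polynomial equal to $z$ on $[0,1]$ and to $-z$ on $i[0,1]$, which is impossible. Hence $\rho_N(\Omega_L)\sim\tfrac{m_N}{2}L^2\to\infty$, so $\rho_N$ has no maximizer among hexagons of area $1$.

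The heart of the argument is the technical justification of the displayed asymptotic — the coefficient bound $|d_k|=O(L^{1-k})$ for near-optimal $p$, and the fact that replacing $\bar z$ by the relevant linear function on each leg, neglecting the finite width, and discarding the corner each contribute only $O(1)$ — which is routine but needs care. A sturdier route to the lower bound alone is the dual formula $\rho_N(\Omega)=\sup\{\,|\langle\bar z,g\rangle|^2:\ g\perp\mcp_N,\ \|g\|_{L^2(\Omega)}=1\,\}$: one takes an explicit $g_L$ concentrated on the two legs, say $g_L=c\,(wL)^{-1/2}\psi_j(\cdot/L)$ on leg $j$ with $\psi_2(s)=s$ and $\psi_1\in\mcp_N$ chosen so that $\langle z^k,g_L\rangle=0$ for $0\le k\le N$, and checks $|\langle\bar z,g_L\rangle|^2\gtrsim L^2$; there the delicate point becomes solving those $N+1$ orthogonality conditions on $\Omega_L$ exactly while keeping all constants bounded away from $0$ as $L\to\infty$. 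Finally, nothing depends on the legs being perpendicular: the same argument runs for segments in any two directions $\theta_1\not\equiv\theta_2\pmod\pi$, with $m_N$ replaced by $\inf_{Q\in\mcp_N}\sum_{j=1}^{2}\int_0^1|Q(e^{i\theta_j}s)-e^{-i\theta_j}s|^2\,ds>0$.
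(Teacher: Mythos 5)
The statement you are addressing is left as a \emph{conjecture} in the paper: the authors prove only the cases $N=1,2$ (Theorem \ref{nomax}), taking $n=6$ and computing $\rho_1$ and $\rho_2$ in closed form, via \eqref{rho1form} and Proposition \ref{rho2def}, on a one-parameter family of ``windmill'' hexagons $\Gamma_a$ whose three thin triangular blades point in pairwise non-parallel directions. Your construction differs in detail (an L-shaped hexagon with two thin legs) but is identical in mechanism: the region concentrates near two (resp.\ three) non-parallel lines, on each of which $\bar z$ agrees with a different affine function of $z$, and a single polynomial of degree at most $N$ cannot track both. What your route buys is generality: the soft rescaling argument --- the $O(L^2)$ upper bound from the competitor $p=z$, the a priori coefficient bounds $|d_k|=O(L^{1-k})$ for near-minimizers via norm equivalence on the finite-dimensional space $\mcp_N$, the reduction to the one-dimensional quantity $m_N$, and the positivity of $m_N$ by the identity theorem --- is uniform in $N$, so if written out in full it would establish the conjecture in the strong form that $n=6$ works for \emph{every} $N$, which goes well beyond what the paper proves. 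What the paper's route buys is exact formulas for $\rho_1(\Gamma_a)$ and $\rho_2(\Gamma_a)$, which it reuses to deduce Corollary \ref{nosteiner} on Steiner symmetrization. The steps you flag as ``routine but needs care'' do go through: by Fubini one finds a horizontal slice on which $\|p\|_{L^2}^2\le CL^3$, rescaling gives bounded coefficients for $Q(s)=p(Ls)/L$, and then each replacement error (substituting $x$ or $-iy$ for $\bar z$, freezing the transverse variable in $p$, and the corner square, where $|p|=O(L)$ on a set of area $w^2=O(L^{-2})$) contributes only $O(1)$, so $\rho_N(\Omega_L)\geq \tfrac{1}{2}m_NL^2(1-o(1))\to\infty$. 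The only caveat is that you have sketched rather than executed these estimates, so as written this is a correct and genuinely stronger proof outline rather than a finished proof.
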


We will provide evidence for this conjecture by proving the following theorem, which shows why we included the convexity assumption in Conjecture \ref{rhon}.

\begin{theorem}\label{nomax}
Among all hexagons with area $1$, $\rho_1$ and $\rho_2$ have no maximizer.
\end{theorem}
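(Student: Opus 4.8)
The plan is to prove more than the stated non-existence: I claim $\rho_1$ and $\rho_2$ are in fact \emph{unbounded} above over the family of (not necessarily convex) hexagons of area $1$. Since $\rho_1(\Omega)$ and $\rho_2(\Omega)$ are finite integrals for every fixed hexagon $\Omega$, unboundedness immediately implies that neither functional attains a maximum. The hexagons realizing this growth will be thin three-pointed star polygons, and the point of taking them with a $3$-fold rotational symmetry is that this renders $\rho_1$ and $\rho_2$ explicitly computable. Indeed, suppose $\Omega$ is a Jordan region invariant under $z\mapsto\omega z$, where $\omega=e^{2\pi i/3}$; then its centroid is the origin. With the $L^2(\Omega,dA)$ pairing $\langle f,g\rangle=\int_\Omega f\bar g\,dA$, the substitution $z\mapsto\omega z$ gives $\int_\Omega z^2\,dA=\omega^2\int_\Omega z^2\,dA$ and $\int_\Omega|z|^2z\,dA=\omega\int_\Omega|z|^2z\,dA$, hence $\int_\Omega z^2\,dA=\int_\Omega|z|^2z\,dA=0$, while $\int_\Omega z\,dA=0$. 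Therefore $\{1,z,z^2\}$ is an orthogonal set in $A^2(\Omega)$, and among these three functions only $z^2$ fails to be orthogonal to $\bar z$, with $\langle\bar z,z^2\rangle=\overline{\int_\Omega z^3\,dA}$. Projecting $\bar z$ onto $\mcp_1$ and $\mcp_2$ therefore yields
\[
\rho_1(\Omega)=\int_\Omega|z|^2\,dA,\qquad
\rho_2(\Omega)=\int_\Omega|z|^2\,dA-\frac{\bigl|\int_\Omega z^3\,dA\bigr|^{2}}{\int_\Omega|z|^4\,dA}.
\]

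Next I would introduce the test hexagons. For $R\ge 1$, let $\Omega_R$ be the star-shaped hexagon with outer vertices $R\omega^k$ and inner vertices $r\,\omega^k e^{i\pi/3}$, $k=0,1,2$, where $r=r(R)$ is chosen so that $\mathrm{Area}(\Omega_R)=1$; since the area of this star equals $\tfrac{3\sqrt3}{2}Rr$, we set $r=\tfrac{2}{3\sqrt3\,R}$. For $R\ge1$ the six vertices are distinct with no three collinear and $r<R/2$, so $\Omega_R$ is a genuine (non-convex) hexagon, and it is invariant under $z\mapsto\omega z$. As $R\to\infty$ the region concentrates into three thin slivers pointing in the directions $\omega^k$; the sliver near $R\omega^k$ has length $\approx R$ and width $\approx r\sqrt3\,(1-\rho/R)$ at distance $\rho$ from the origin, while the "hub" near the origin and the transverse widths of the slivers contribute only lower-order terms. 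Computing the three moments by integrating along these slivers and substituting $r=\tfrac{2}{3\sqrt3\,R}$ gives
\[
\int_{\Omega_R}|z|^2\,dA\sim\frac{R^2}{6},\qquad
\Bigl|\int_{\Omega_R}z^3\,dA\Bigr|\sim\frac{R^3}{10},\qquad
\int_{\Omega_R}|z|^4\,dA\sim\frac{R^4}{15}\qquad(R\to\infty).
\]

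Plugging these into the formulas from the first paragraph gives $\rho_1(\Omega_R)\sim R^2/6\to\infty$ and $\rho_2(\Omega_R)\sim\bigl(\tfrac16-\tfrac{3}{20}\bigr)R^2=R^2/60\to\infty$, so for any hexagon $\Omega$ of area $1$ one can choose $R$ with $\rho_1(\Omega_R)>\rho_1(\Omega)$ and, separately, $R$ with $\rho_2(\Omega_R)>\rho_2(\Omega)$; hence neither $\rho_1$ nor $\rho_2$ has a maximizer. The $\rho_1$ half of this is easy once $\int_{\Omega_R}|z|^2\,dA\to\infty$ is checked. The delicate point, and the step I expect to require the most care, is $\rho_2$: a priori the subtracted term $\bigl|\int z^3\,dA\bigr|^2/\int|z|^4\,dA$ could be asymptotic to $\int|z|^2\,dA$ and cancel the growth entirely. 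Two things prevent this. First, the three slivers contribute to $\int_{\Omega_R}z^3\,dA$ \emph{in phase}: along the $k$th sliver $z\approx\rho\,\omega^k$ and $(\omega^k)^3=1$ for every $k$, so this integral is genuinely of size $R^3$. Second, the explicit constants show the subtracted term is asymptotic to only $\tfrac{9}{10}$ of $\int|z|^2\,dA$, leaving a positive residual of order $R^2$. I would verify the sliver integrals and these constants carefully, but the qualitative conclusion — unboundedness of $\rho_1$ and $\rho_2$, hence non-existence of a maximizer — is insensitive to the precise geometry of the slivers.
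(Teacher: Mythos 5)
Your proposal is correct, and its core geometric idea is the same as the paper's: both exhibit a one-parameter family of non-convex, unit-area hexagons consisting of three long thin arms at mutual angles of $120^\circ$ (the paper's ``windmill'' $\Gamma_a$, your three-pointed star $\Omega_R$) and show that $\rho_1,\rho_2\to\infty$ as the arms lengthen, so neither functional has a maximizer. Where you genuinely diverge is in how $\rho_1$ and $\rho_2$ are evaluated. The paper computes the moments $I_{m,n}$ of $\Gamma_a$ exactly from the equations of its six boundary lines and substitutes them into \eqref{rho1form} and the large determinant-based formula of Proposition \ref{rho2def}, obtaining exact closed forms whose blow-up is then immediate. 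You instead exploit the $3$-fold rotational symmetry to show that $\{1,z,z^2\}$ is already orthogonal in $L^2(\Omega_R,dA)$ and that $\bar z\perp\{1,z\}$, which collapses the quantities to $\rho_1=\int|z|^2\,dA$ and $\rho_2=\int|z|^2\,dA-\bigl|\int z^3\,dA\bigr|^2/\int|z|^4\,dA$, bypassing Proposition \ref{rho2def} entirely; the price is that you must then control the asymptotics of three moments and confirm that the subtracted term is only $\tfrac{9}{10}$ of the leading one. Your constants $\tfrac16$, $\tfrac1{10}$, $\tfrac1{15}$ and the residual $R^2/60$ are correct (the transverse widths and the hub contribute only lower-order errors), so the argument closes. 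Your route is shorter and more conceptual --- it makes transparent both why the arms contribute to $\int z^3\,dA$ in phase and why the cancellation in $\rho_2$ is only partial --- while the paper's route is computationally heavier but produces exact formulas valid for every value of the parameter rather than only asymptotically.
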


Before we prove this result, let us recall some notation.  We define the moments of area for $\Omega$ as in \cite{FS19} by
\[
I_{m,n}:=\int_\Omega x^my^ndxdy,\qquad\qquad\qquad m,n\in\mathbb{N}_0.
\]
In \cite{FS19} it was shown that if the centroid of $\Omega$ is $0$, then
\begin{equation}\label{rho1form}
\rho_1(\Omega)=4\frac{I_{2,0}I_{0,2}-I_{1,1}^2}{I_{2,0}+I_{0,2}}
\end{equation}
(see also \cite{DW}).

One can write down a similar formula for $\rho_2(\Omega)$, which is the content of the following proposition.

\begin{prop}\label{rho2def}
Let $\Omega$ be a simply connected, bounded region of area 1 in $\mathbb{C}$ whose centroid is at the origin. Then \newline $\rho_2(\Omega)=4\Big(I_ {0,4} I_ {1,1}^2 - 4 I_ {1,1}^4 - 
     2 I_ {0,3} I_ {1,1} I_ {1,2} + I_ {0,2}^3 I_ {2,
       0} + I_ {0,3}^2 I_ {2,0} + 
     4 I_ {1,2}^2 I_ {2
       ,0} - I_ {1,1}^2 I_ {2,0}^2 - I_ {0,
         2}^2 (I_ {1,1}^2 + 2 I_ {2,0}^2) - 
     6 I_ {1,1} I_ {1,2} I_ {2,1} - 
     2 I_ {0,3} I_ {2,0} I_ {2,1} + I_ {2,0} I_ {2,1}^2 + 
     2 I_ {1,1}^2 I_ {2,2} + 2 I_ {0,3} I_ {1,1} I_ {3,0} - 
     2 I_ {1,1} I_ {2,1} I_ {3,0} + 
     I_ {0,2} (4 I_ {2,1}^2 + (I_ {1,2} - I_ {3,0})^2 + 
        I_ {2,0} (-I_ {0,4} + 6 I_ {1,1}^2 + I_ {2,0}^2 - 
           2 I_ {2,2} - I_ {4,0})) + I_ {1,1}^2 I_ {4,
       0}\Big)\Big/\Big((I_ {0,3} + I_ {2,1})^2 + (I_ {1,2} + 
     I_ {3,0})^2 + (I_ {0,2} + I_ {2,0}) (-I_ {0,4} + 
      4 I_ {1,1}^2 + (I_ {0,2} - I_ {2,0})^2 - 2 I_ {2,2} - 
      I_ {4,0})\Big)$
      \end{prop}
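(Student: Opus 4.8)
The plan is to identify $\rho_2(\Omega)$ with the square of the distance from $\bar z$ to $\mcp_2=\operatorname{span}\{1,z,z^2\}$ in $L^2(\Omega,dA)$ and then reduce the minimization to a $3\times 3$ linear algebra problem, just as one does to obtain \eqref{rho1form}. Writing a general element of $\mcp_2$ as $p=a_0+a_1z+a_2z^2$, the best approximant is characterized by $\bar z-p\perp\mcp_2$, equivalently by the Hermitian linear system $Ga=b$, where $G$ is the Gram matrix $G_{jk}=\langle z^{k},z^{j}\rangle=\int_\Omega z^{k}\bar z^{\,j}\,dA$ and $b$ is the vector $b_j=\langle\bar z,z^{j}\rangle=\int_\Omega\bar z^{\,j+1}\,dA$, for $j,k\in\{0,1,2\}$. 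Because $\Omega$ has positive area, the functions $1,z,z^2$ are linearly independent in $L^2(\Omega,dA)$, so $G$ is positive definite and
\[
\rho_2(\Omega)=\int_\Omega|z|^2\,dA-b^{*}G^{-1}b=\frac{\bigl(\int_\Omega|z|^2\,dA\bigr)\det G-b^{*}\operatorname{adj}(G)\,b}{\det G},
\]
which is automatically real since $G=G^{*}>0$.

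Next I would express every ingredient through the moments $I_{m,n}$ by substituting $z=x+iy$ and invoking the normalizations $I_{0,0}=1$ and $I_{1,0}=I_{0,1}=0$ that follow from the area and centroid hypotheses. This gives, among other identities, $G_{00}=1$, $G_{01}=G_{10}=0$, $G_{02}=\overline{G_{20}}=b_1=I_{2,0}-I_{0,2}-2iI_{1,1}$, $G_{11}=I_{2,0}+I_{0,2}$, $G_{12}=\overline{G_{21}}=(I_{3,0}+I_{1,2})-i(I_{2,1}+I_{0,3})$, $G_{22}=I_{4,0}+2I_{2,2}+I_{0,4}$, together with $b_0=0$, $b_2=(I_{3,0}-3I_{1,2})-i(3I_{2,1}-I_{0,3})$, and $\int_\Omega|z|^2\,dA=I_{2,0}+I_{0,2}$. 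Substituting these into $\det G$ and $b^{*}\operatorname{adj}(G)\,b$, expanding and simplifying, the imaginary parts cancel identically and one is left with a real rational function of the $I_{m,n}$. A short direct computation shows $\det G$ equals the \emph{negative} of the denominator displayed in the statement (hence that denominator is $<0$, which is why the quotient is $\ge 0$), and the parenthesized numerator is $\tfrac14\Bigl(b^{*}\operatorname{adj}(G)\,b-\bigl(\int_\Omega|z|^2\,dA\bigr)\det G\Bigr)$; the factor $4$ here is produced by the same $(\alpha+\beta)^2-(\alpha-\beta)^2=4\alpha\beta$ cancellations already seen in \eqref{rho1form}.

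The only real obstacle is the size of the symbolic algebra: $b^{*}\operatorname{adj}(G)\,b$ expands into an expression with several dozen monomials in the $I_{m,n}$, and collecting it against the claimed closed form is most safely carried out in a computer algebra system, which is presumably how the formula was found. To guard against transcription errors I would run two consistency checks. First, on a region with $\bar z\perp\mcp_2$ — for instance a disk centered at the origin, where $b=0$ and all cubic moments as well as $I_{1,1}$ vanish — the formula must collapse to $\rho_2(\Omega)=I_{2,0}+I_{0,2}=\int_\Omega|z|^2\,dA$, and one checks that it does. Second, deleting the last row and column of $G$ (that is, dropping $z^2$) must reproduce \eqref{rho1form}.

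Finally, an equivalent though slightly less symmetric alternative is to Gram--Schmidt $\{1,z,z^2\}$ into the Bergman orthonormal polynomials $p_0,p_1,p_2$ and use $\rho_2(\Omega)=\rho_1(\Omega)-|\langle\bar z,p_2\rangle|^2$; only $p_2$ need be computed, and putting the result over a common denominator with \eqref{rho1form} recovers the stated expression.
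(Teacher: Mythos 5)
Your proposal is correct and follows essentially the same route as the paper: the paper computes $\rho_2=\rho_1-|\langle 1,wp_2\rangle|^2$ using the determinant ratio \eqref{rho2det} in the complex moments $c_{m,n}$, which is exactly your normal-equations computation $\rho_2=\int_\Omega|z|^2\,dA-b^{*}G^{-1}b$ organized incrementally, followed by the same computer-algebra reduction to the real moments $I_{m,n}$. (One cosmetic slip: $G_{02}=\int_\Omega z^2\,dA=\overline{b_1}=I_{2,0}-I_{0,2}+2iI_{1,1}$, not $b_1$; this does not affect the argument, and your identification of the denominator with $-\det G$ and the disk sanity check are both right.)
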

\begin{proof}
It has been shown in \cite{FS19} that    
\begin{equation}\label{rho2det}
|\langle 1,wp_2(w)\rangle |^2=
\begin{vmatrix}
c_{0,0}&c_{0,1}&c_{0,2} \\ 
c_{1,0}&c_{1,1}&c_{1,2} \\ 
c_{0,1}&c_{0,2}&c_{0,3}
\end{vmatrix}^2\Bigg/\left(\begin{vmatrix}
c_{0,0}&c_{1,0} \\ 
c_{0,1}&c_{1,1}  \end{vmatrix}\cdot\begin{vmatrix}
c_{0,0}&c_{0,1}&c_{0,2} \\ 
c_{1,0}&c_{1,1}&c_{1,2} \\ 
c_{2,0}&c_{2,1}&c_{2,2}
\end{vmatrix}\right)  \end{equation}
where
\[
c_{m,n}=\langle z^m,z^n\rangle=\int_\Omega z^m\bar z^ndA(z)
\] 

We can then write 
\begin{align*}
\rho_2(\Omega)&=\rho_1(\Omega)-|\langle 1,wp_2(w;\Omega)\rangle|^2\\
&=4\frac{I_{2,0}I_{0,2}-I_{1,1}^2}{I_{2,0}+I_{0,2}}-|\langle 1,wp_2(w;\Omega)\rangle|^2
\end{align*}
If one calculates $|\langle 1,wp_2(w;\Omega)\rangle|^2$ using \eqref{rho2det}, one obtains the desired formula for $\rho_2({\Omega})$. 
\end{proof}

\begin{proof}[Proof of Theorem \ref{nomax}]
We will rely on the formula \eqref{rho1form} in our calculations.  To begin, fix $a>0$ and construct a triangle with vertices $(-\epsilon,0)$, $(\epsilon/2,\frac{\epsilon\sqrt{3}}{2})$, and $(-\epsilon/2,\frac{\epsilon\sqrt{3}}{2})$, where $\epsilon=\frac{2}{3a\sqrt{3}}$. Consider also the set of points $S=\{(-a/2,\frac{a\sqrt{3}}{2}),(a,0),(-a/2,-\frac{a \sqrt{3}}{2})\}$.  To each side of our triangle, append another triangle whose third vertex is in the set $S$, as shown in Figure 1.  Let this resulting ``windmill" shaped region be denoted by $\Gamma_a$.
{
\begin{figure}[H] 
    \centering
    \includegraphics[scale=0.5]{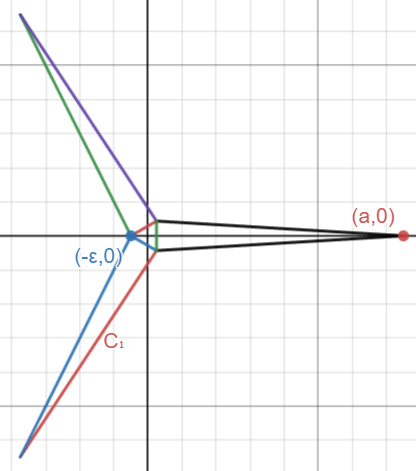}
    \caption{\textit{The region $\Gamma_a$ from the proof of Theorem \ref{nomax}.}}
    \label{label1}
\end{figure}}

To calculate the moments of area, we first determine the equations of the lines that form the boundary of this region. Starting with $C_1(x,a)$ in the 3rd quadrant and moving clockwise we have:
\begin{align*}
C_1(x,a)&=x\sqrt{3}-\frac{6(2x+a)}{2\sqrt{3}+9a^2}\\
C_2(x,a)&=\frac{3a(2+3ax\sqrt{3})}{-4\sqrt{3}+9a^2}\\
C_3(x,a)&=\frac{3a(2+3ax\sqrt{3})}{4\sqrt{3}-9a^2}\\
C_4(x,a)&=-x\sqrt{3}+\frac{6(2x+a)}{2\sqrt{3}+9a^2}\\
C_5(x,a)&=\frac{3(x-a)}{\sqrt{3}-9a^2}\\
C_6(x,a)&=\frac{-3(x-a)}{\sqrt{3}-9a^2}
\end{align*}
To determine $\rho_1(\Gamma_a)$, we calculate the terms $I_{2,0},I_{0,2},$ and $I_{1,1}$ with boundaries determined by the lines given above. Thus for $m,n\in\{0,1,2\}$ we have
\begin{align*}I_{m,n}(\Gamma_a)&=\int_{-\epsilon}^{\epsilon/ 2}\int_{C_1}^{C_4} x^my^n dydx
+
\int_{\epsilon/2}^a\int_{C_6}^{C_5} x^my^n dydx
+
\int_{-a/2}^{-\epsilon}\int_{C_3}^{C_4}x^my^n dydx\\
&\hspace{17mm}+ \int_{-a/2}^{-\epsilon}\int_{C_1}^{C_2}x^my^n dydx
\end{align*}

These are straightforward double integrals and after some simplification, we obtain
\[
\rho_1(\Gamma_a)=4\frac{I_{2,0}I_{0,2}-I_{1,1}^2}{I_{2,0}+I_{0,2}}=\frac{1}{162}\left(3\sqrt{3}+\frac{4}{a^2}+27a^2\right)
\]
and
\[
\rho_2(\Gamma_a)=\frac{1}{1620}\left(3\sqrt{3}+\frac{4}{a^2}+27a^2(1+90/(27a^4-6\sqrt{3}a^2+4))\right),
\]
where we used the formula from Proposition \ref{rho2def} to calculate this last expression.
Notice that we have constructed $\Gamma_a$ so that the area of $\Gamma_a$ is $ 1$ for all $a>0$.  Thus, as $a\rightarrow\infty$, it holds that $\rho_j(\Gamma_a)\rightarrow\infty$ for $j=1,2$.
\end{proof}

Theorem \ref{nomax} has an important corollary, which highlights how the optimization of $\rho_N$ is fundamentally different than the optimization of $\rho$.

\begin{corollary}\label{nosteiner}
Steiner symmetrization need not increase $\rho_1$ or $\rho_2$.
\end{corollary}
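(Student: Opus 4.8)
The plan is to argue by contradiction, pitting the windmill regions $\Gamma_a$ from the proof of Theorem~\ref{nomax} against the classical fact that a bounded measurable planar set can be carried onto a disk by iterated Steiner symmetrization. The external input I will use is: if $E$ is a bounded set of area $1$ translated so that its centroid is at the origin, then there is a sequence of Steiner symmetrizations of $E$, taken with respect to lines through the origin, whose iterates $E=\Omega_0,\Omega_1,\Omega_2,\dots$ converge in $L^1$ (equivalently, in measure) to the disk $B$ of area $1$ centered at the origin. Taking every symmetrization line through the centroid keeps the centroid fixed at the origin for all $\Omega_k$, and keeps all the $\Omega_k$ inside one fixed disk about the origin. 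Since $\rho_1$ and $\rho_2$ are translation invariant — each is the square of the distance from $\bar z$ to a fixed polynomial subspace ($\mcp_1$, resp.\ $\mcp_2$) in $L^2(\Omega,dA)$, and replacing $\bar z$ by $\bar z+\bar c$ only shifts it by an element of that subspace — we may assume our regions have centroid $0$ and apply \eqref{rho1form} and Proposition~\ref{rho2def} directly to each $\Omega_k$.

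Now suppose, toward a contradiction, that $\rho_1(\Omega')\ge\rho_1(\Omega)$ whenever $\Omega'$ is a Steiner symmetrization of $\Omega$. Fix $a>0$, let $\Omega_0=\Gamma_a$ (translated to have centroid $0$), and let $\Omega_0,\Omega_1,\Omega_2,\dots$ be the symmetrization sequence above, so $\mathbf 1_{\Omega_k}\to\mathbf 1_B$ in $L^1$ with all $\Omega_k$ inside a fixed disk. The moment functionals $I_{m,n}$ with $m+n\le 2$ are then continuous along the sequence, and the denominator $I_{2,0}+I_{0,2}$ of \eqref{rho1form} equals $\int(x^2+y^2)\,dA$, which is bounded below (it is minimized by the disk), so $\rho_1(\Omega_k)\to\rho_1(B)$. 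On a disk centered at $0$ the best $A^2$-approximant of $\bar z$ is $0\in\mcp_1$, hence $\rho_1(B)=\rho(B)$; scaling $\rho(\bbD)=\pi/2$ to unit area gives $\rho_1(B)=1/(2\pi)$. By the assumed monotonicity the sequence $\rho_1(\Omega_k)$ is nondecreasing, so $\rho_1(\Gamma_a)\le 1/(2\pi)$ for every $a>0$. But the explicit value $\rho_1(\Gamma_a)=\tfrac1{162}\bigl(3\sqrt3+\tfrac4{a^2}+27a^2\bigr)$ obtained in the proof of Theorem~\ref{nomax} tends to $\infty$ as $a\to\infty$, so $\rho_1(\Gamma_a)>1/(2\pi)$ for large $a$ — a contradiction. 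Hence for $a$ large at least one single step of the sequence $\{\Omega_k\}$ strictly decreases $\rho_1$, which is the assertion.

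The argument for $\rho_2$ is the same, with the rational expression of Proposition~\ref{rho2def} in place of \eqref{rho1form}: its denominator is a polynomial in the moments $I_{m,n}$ which, via \eqref{rho2det}, is a nonzero constant times a product of Gram determinants of $1,z,z^2$, hence is positive for any genuine planar region and in particular at $B$, so $\rho_2(\Omega_k)\to\rho_2(B)$. As before the best $A^2$-approximant of $\bar z$ on $B$ is $0\in\mcp_2$, so $\rho_2(B)=\rho(B)=1/(2\pi)$, while the stated value $\rho_2(\Gamma_a)=\tfrac1{1620}\bigl(3\sqrt3+\tfrac4{a^2}+27a^2(1+90/(27a^4-6\sqrt3a^2+4))\bigr)$ tends to $\infty$ as $a\to\infty$. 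The same contradiction shows that some Steiner symmetrization must strictly decrease $\rho_2$.

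The one step that genuinely deserves care is the appeal to convergence of iterated Steiner symmetrizations to a disk: our intermediate sets $\Omega_k$ need not be convex or even connected, so one should quote a version of this theorem valid for bounded measurable sets, with $L^1$ (in measure) convergence, and with the symmetrization lines taken through the common centroid so that the moment functionals are controlled along the sequence. Granting that, the remaining ingredients — translation invariance of $\rho_1,\rho_2$, continuity of the functionals $I_{m,n}$ along a uniformly bounded $L^1$-convergent sequence, and the evaluations $\rho_1(B)=\rho_2(B)=1/(2\pi)$ — are routine, and I expect the invocation of the symmetrization theorem to be the only real obstacle.
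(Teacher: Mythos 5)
Your proposal is correct in outline, but it takes a genuinely different and much heavier route than the paper. The paper's proof never leaves the example $\Gamma_a$: it performs a \emph{single} Steiner symmetrization, about the real axis, and observes that for large $a$ the symmetrized set is a thin region hugging the real axis, so that $\mathrm{dist}(\bar z,\mcp_1)^2\le\|\bar z-z\|_{L^2}^2=4\int y^2\,dA$ stays bounded as $a\to\infty$ (since $z\in\mcp_1$), while $\rho_1(\Gamma_a)\to\infty$; the inequality $\rho_2\le\rho_1$ then disposes of $\rho_2$ for free. You instead import the nontrivial theorem that a suitable sequence of iterated Steiner symmetrizations of a bounded measurable set converges in measure to the ball, and run a continuity-plus-monotonicity contradiction. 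The supporting steps you list are all sound: symmetrizing about lines through the centroid fixes both the centroid and a containing disk centered there; the moments $I_{m,n}$ are continuous under $L^1$ convergence of indicators on a fixed bounded set; the denominators in \eqref{rho1form} and Proposition \ref{rho2def} do not vanish at the ball; and $\rho_1(B)=\rho_2(B)=\rho(B)=1/(2\pi)$ by radial symmetry and scaling. Granting the convergence theorem (which does exist in the generality you need, for measurable sets with convergence in measure along a suitable, even universal, sequence of directions), the argument closes. What it costs you is explicitness: you learn only that \emph{some} step of the iteration strictly decreases $\rho_1$ (resp.\ $\rho_2$), possibly at an intermediate set that is neither a polygon nor connected, whereas the paper exhibits the specific region and the specific axis. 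What it buys you is generality: the same scheme shows that no shape functional that is unbounded on sets of fixed area, continuous under convergence in measure on uniformly bounded sets, and finite at the disk can be monotone under Steiner symmetrization. One simplification you could make: for $\rho_2$ you need no continuity of the formula in Proposition \ref{rho2def} at all, since $\rho_2(\Omega_k)\le\rho_1(\Omega_k)\to 1/(2\pi)$ already caps a would-be nondecreasing sequence $\rho_2(\Omega_k)$ at $1/(2\pi)$, giving the same contradiction with $\rho_2(\Gamma_a)\to\infty$.
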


\begin{proof}
If we again consider the region $\Gamma_a$ from the proof of Theorem \ref{nomax}, we see that if we Steiner symmetrize this region with respect to the real axis, then the symmetrized version is a thin region that barely deviates from the real axis (as $a$ becomes large).  Thus, $\bar{z}$ is approximately equal to $z$ in this region and one can show that $\rho_1$ of the symmetrized region remains bounded as $a\rightarrow\infty$.  Since $\rho_2\leq\rho_1$, the same holds true for $\rho_2$.
\end{proof}


Our next several theorems will be about triangles.  For convenience, we state the following basic result, which can be verified by direct computation.

\begin{prop}\label{centroidtri}
Let $\Delta$ be the triangle with vertices $(x_1,y_1),(x_2,y_2),(x_3,y_3)$ and centroid $\vec{c}$. Then
\[
\vec{c}=\left(\frac{x_1+x_2+x_3}{3},\frac{y_1+y_2+y_3}{3}\right)
\]
\end{prop}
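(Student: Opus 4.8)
The plan is to exploit the affine covariance of the centroid. Write $\vec{v}_i=(x_i,y_i)$ for $i=1,2,3$ and recall that by definition $\vec{c}=|\Delta|^{-1}\int_\Delta \vec{r}\,dA(\vec{r})$, where $|\Delta|$ denotes the area of $\Delta$. Let $\Delta_0$ be the standard triangle with vertices $(0,0)$, $(1,0)$, $(0,1)$, and let $T\colon\mathbb{R}^2\to\mathbb{R}^2$ be the affine bijection
\[
T(u,v)=\vec{v}_1+u(\vec{v}_2-\vec{v}_1)+v(\vec{v}_3-\vec{v}_1),
\]
which carries $\Delta_0$ onto $\Delta$. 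Its Jacobian is the constant $J=(x_2-x_1)(y_3-y_1)-(x_3-x_1)(y_2-y_1)$, which is nonzero because the three vertices are not collinear, and consequently $|\Delta|=|J|\,|\Delta_0|=|J|/2$.

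Next I would change variables in the defining integral. Since $T$ is affine and $J$ is constant,
\[
\int_\Delta \vec{r}\,dA(\vec{r})=|J|\int_{\Delta_0}T(u,v)\,dA(u,v)=|J|\left(|\Delta_0|\,\vec{v}_1+(\vec{v}_2-\vec{v}_1)\!\int_{\Delta_0}\! u\,dA+(\vec{v}_3-\vec{v}_1)\!\int_{\Delta_0}\! v\,dA\right).
\]
The remaining ingredients are the elementary moments over the standard simplex, $\int_{\Delta_0}u\,dA=\int_{\Delta_0}v\,dA=1/6$, which follow at once from the iterated integral $\int_0^1\!\int_0^{1-u}u\,dv\,du=\int_0^1 u(1-u)\,du$, together with $|\Delta_0|=1/2$. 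Substituting gives
\[
\int_\Delta \vec{r}\,dA(\vec{r})=|J|\left(\tfrac12\vec{v}_1+\tfrac16(\vec{v}_2-\vec{v}_1)+\tfrac16(\vec{v}_3-\vec{v}_1)\right)=\frac{|J|}{6}(\vec{v}_1+\vec{v}_2+\vec{v}_3),
\]
and dividing by $|\Delta|=|J|/2$ yields $\vec{c}=\tfrac13(\vec{v}_1+\vec{v}_2+\vec{v}_3)$, which is exactly the claimed formula.

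There is essentially no obstacle here: the computation is routine, and the only point requiring a word of care is the nondegeneracy $J\neq 0$, which is automatic for a genuine triangle. An alternative that avoids invoking the change-of-variables theorem is to compute $\int_\Delta x\,dA$ and $\int_\Delta y\,dA$ directly as iterated integrals after translating one vertex to the origin and splitting $\Delta$ along the vertical line through the middle vertex; this reproduces the same moments but is more tedious, so I would present the affine argument.
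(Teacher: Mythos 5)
Your proof is correct: the affine map onto the standard simplex, the constant Jacobian, and the moments $\int_{\Delta_0}u\,dA=\int_{\Delta_0}v\,dA=1/6$ are all handled properly, and the algebra closes. The paper gives no proof at all — it merely states that the proposition "can be verified by direct computation" — so your change-of-variables argument is simply a clean, well-organized way of carrying out exactly the computation the authors had in mind.
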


For the following results, we define the \textit{base} of an isosceles triangle as the side whose adjacent sides have equal length to each other. In the case of an equilateral triangle, any side may be considered as the base.

Here is our first open problem about maximizing $\rho_N$ for certain fixed collections of triangles.

\begin{pr}\label{fixedbasen}
For each $N\in\bbN$ and $a>0$, find the triangle with area $1$ and fixed side length $a$ that maximizes $\rho_N$.
\end{pr}

Given the prevalence of symmetry in the solution to optimization problems, one might be tempted to conjecture that the solution to Problem \ref{fixedbasen} is the isosceles triangle with base $a$.  This turns out to be true for $\rho_1$, but it is only true for $\rho_2$ for some values of $a$.  Indeed, we have the following result.

\begin{theorem}\label{steinertri}
(i) Among all triangles with area 1 and a fixed side of length $a$, the isosceles triangle with base $a$ maximizes $\rho_1$. 

\medskip

(ii) Let $t_*$ be the unique positive root of the polynomial
\[
999x^4/64-93x^3-664x^2-5376x-9216.
\]
If $0<a\leq t_*^{1/4}$, then among all triangles with area 1 and a fixed side of length $a$, the isosceles triangle with base $a$ maximizes $\rho_2$.  If $a>t_*^{1/4}$, then among all triangles with area 1 and a fixed side of length $a$, the isosceles triangle with base $a$ does not maximize $\rho_2$.
\end{theorem}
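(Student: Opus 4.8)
The plan is to parametrize the competitors by one real parameter and reduce everything to explicit moment computations. Put the side of fixed length $a$ with endpoints $(-a/2,0)$ and $(a/2,0)$; since the triangle has area $1$, its third vertex lies on the line $y=2/a$, say at $(t,2/a)$, and $t=0$ corresponds to the isosceles triangle with base $a$. Using Proposition~\ref{centroidtri} I would first translate so that the centroid is at the origin; then every moment $I_{m,n}$ with $m+n\le 4$ becomes an explicit rational function of $a$ and $t$, most conveniently by pulling back to the standard triangle with vertices $(0,0),(1,0),(0,1)$ via an affine map and using $\int_{T_0}u^pv^q\,du\,dv=\tfrac{p!\,q!}{(p+q+2)!}$, or simply by direct integration as in the proof of Theorem~\ref{nomax}.

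Part~(i) then follows at once: one computes
\[
I_{2,0}I_{0,2}-I_{1,1}^2=\tfrac1{108},\qquad\quad I_{2,0}+I_{0,2}=\frac{a^2}{24}+\frac{2}{9a^2}+\frac{t^2}{18},
\]
the first being forced to equal that constant because all unit-area triangles are affinely equivalent through maps whose linear part has determinant $\pm1$, and the determinant of the centred second-moment matrix is invariant under such maps. Hence by \eqref{rho1form},
\[
\rho_1=\frac{1/27}{\dfrac{a^2}{24}+\dfrac{2}{9a^2}+\dfrac{t^2}{18}},
\]
which is strictly decreasing in $|t|$, so $t=0$ is the unique maximizer.

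For part~(ii) I would substitute the moments into the formula of Proposition~\ref{rho2def}, obtaining $\rho_2=R(a,t)$ as an explicit rational function whose denominator is, up to sign, the Gram determinant of $\{1,z,z^2\}$ and hence is sign-definite and nonzero for all real $t$. Every rigid motion preserves the distance from $\bar z$ to $\mcp_2$, and the reflection across the $y$-axis interchanges the triangles with apices $(t,2/a)$ and $(-t,2/a)$, so $R(a,t)=R(a,-t)$; thus $R$ is a real-analytic function of $s:=t^2$ near $s=0$, and $t=0$ is automatically a critical point. The decisive quantity is the sign of $\partial_t^2\rho_2\big|_{t=0}=2\,\partial_sR\big|_{s=0}$, which I expect to equal, up to a positive rational factor in $a$, the value $P(a^4)$ with $P(x)=\tfrac{999}{64}x^4-93x^3-664x^2-5376x-9216$. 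The coefficient sequence of $P$ changes sign exactly once, so by Descartes' rule $P$ has a unique positive root $t_*$, with $P<0$ on $(0,t_*)$ and $P>0$ on $(t_*,\infty)$; therefore $t=0$ is a strict local minimum of $t\mapsto\rho_2$ when $a>t_*^{1/4}$---so the base-$a$ isosceles triangle is not then a maximizer---and a strict local maximum when $a<t_*^{1/4}$.

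The remaining and most delicate point is to upgrade ``local maximum'' to ``global maximum'' when $a\le t_*^{1/4}$. Here I would use that $\rho_2(a,t)\le\rho_1(a,t)\to0$ as $|t|\to\infty$ while $\rho_2(a,0)>0$, so the continuous function $t\mapsto\rho_2(a,t)$ attains its maximum on $[0,\infty)$ at a finite point; it then suffices to show this function has no critical point in $(0,\infty)$ for $a\le t_*^{1/4}$, i.e.\ that the numerator of $\partial_sR(a,s)$---a polynomial in $s$ of modest degree whose coefficients are explicit in $a$---does not vanish for any $s>0$ when $0<a\le t_*^{1/4}$. This two-variable sign statement is the real obstacle; I would attack it by inspecting that polynomial in $s$ (checking that the relevant coefficients have a fixed sign over the admissible range of $a$), or, failing a clean coefficient argument, by a Sturm-sequence or resultant computation certifying the absence of a root $s>0$ when $a^4\le t_*$. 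Combined with the second-derivative dichotomy above this yields exactly the stated threshold, and the compactness remark shows that for $a>t_*^{1/4}$ the maximum is attained at some scalene triangle. All of the algebra here is mechanical; only this last global sign certificate genuinely requires care.
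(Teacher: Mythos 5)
Your setup and part (i) are correct and in substance identical to the paper's: the paper parametrizes by the third vertex $(-2/a,\lambda)$, centers at the centroid, and obtains $\rho_1=\tfrac{2a^2}{3(4+a^2(a^2-a\lambda+\lambda^2))}$, which under the shift $\lambda=a/2+t$ is exactly your formula; your observation that $I_{2,0}I_{0,2}-I_{1,1}^2=1/108$ for every unit-area triangle, by invariance of the centred second-moment determinant under unimodular affine maps, is a clean shortcut the paper does not use but which checks out. Your framing of part (ii) --- evenness in $t$ forces $t=0$ to be critical, and the sign of (a positive multiple of) $P(a^4)$ at the second-derivative level decides local max versus local min --- also agrees with what the paper finds: there the constant term of the relevant polynomial is $999a^{20}/64-93a^{16}-664a^{12}-5376a^8-9216a^4=a^4P(a^4)$.

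The genuine gap is the one you flag yourself: you never establish that $t\mapsto\rho_2$ has no critical point with $t>0$ when $0<a\le t_*^{1/4}$, and without that the local maximum at $t=0$ does not upgrade to a global one, so part (ii) is not proved. The paper closes this by exactly the coefficient-inspection route you list as one option: it writes $\frac{d}{d\lambda}\rho_2=P(\lambda)/Q(\lambda)$ with $Q>0$, factors $P(\lambda)=(\lambda-a/2)S(\lambda)$, and checks that $S(\lambda+a/2)$ is an even polynomial in the shifted variable (your $t$) \emph{all} of whose coefficients are negative for every $a>0$ except the constant term $a^4P(a^4)$. Descartes' rule then gives: no positive (hence, by evenness, no real) root of $S$ when $a^4<t_*$, so $\lambda=a/2$ is the unique critical point and is the global maximum; and exactly one positive root when $a^4>t_*$, which is the off-centre local maximum while $\lambda=a/2$ becomes a local minimum. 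So your plan identifies the right certificate, but the decisive verification --- that every non-constant coefficient of $S(\lambda+a/2)$ has a fixed (negative) sign for all $a>0$ --- is proposed rather than carried out, and the argument is incomplete until that computation is done.
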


\begin{proof}
Let $\hat{\Omega}$ be an area-normalized triangle with fixed side length $a$.
 
We begin by proving part (i).  As $\rho_1$ is rotationally invariant, we may position $\hat{\Omega}$ so that the side of fixed length is parallel to the $y$-axis as seen in Figure 2. Denote vertex $\hat{A}$ as the origin, $\hat{B}$ as the point $(0,a)$, and $\hat{C}$ as the point $(-2/a,\lambda)$.
    {
\begin{figure}[H] 
    \centering
    \includegraphics[scale=0.75]{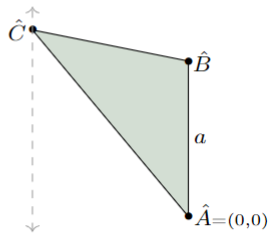}
    \caption{\textit{An area-normalized triangle $\hat{\Omega}$ with variable $\lambda$ and fixed base length $a$.}}
    \label{label4}
\end{figure}}
    
Notice as $\lambda$ varies, the vertex $\hat{C}$ stays on the line $x=-\frac{2}{a}$ in order to preserve area-normalization. If we define
\begin{equation}\label{txty}
T_x:=\int_{\hat{\Omega}}xdA\qquad\qquad\mbox{and}\qquad\qquad T_y:=\int_{\hat{\Omega}}ydA
\end{equation}
then we may translate our triangle to obtain a new triangle $\Omega$ with vertices $A$, $B$, and $C$ given by
\begin{align*}
A&=\left(-T_x,-T_y\right)\\
B&=\left(-T_x,a-T_y\right)\\
C&=\left(-\frac{2}{a}-T_x,\lambda-T_y\right)
\end{align*}
which has centroid zero (see Figure 3).

    {    
    \begin{figure}[H] 
    \centering
    \includegraphics[scale=0.75]{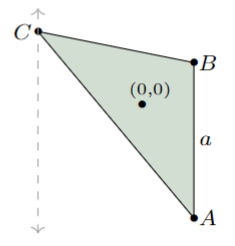}
    \caption{\textit{The area-normalized triangle $\Omega$ as pictured is a translation of $\hat\Omega$, with variable $\lambda$, fixed base length $a$, and whose centroid is the origin.}}
    \label{label5}
\end{figure}}

If we define
\begin{align*}
\ell_1&=\lambda-\frac{\lambda a}{2}\left(x+\frac{2}{a}\right),\qquad\qquad\ell_2=\lambda+\frac{a^2-a\lambda}{2}\left(x+\frac{2}{a}\right),
\end{align*}
by recalling our formula for the moments of area, we have
\[
I_{m,n}(\Omega)=\int_{-\frac{2}{a}-T_x}^{-T_x}\int_{\ell_1}^{\ell_2}x^my^ndydx
\]
We can now calculate $\rho_1$ using \eqref{rho1form} to obtain
\begin{equation}
\rho_1(\Omega)=\frac{2a^2}{3(4+a^2(a^2-a\lambda+\lambda^2))}
\end{equation}
By taking the first derivative with respect to $\lambda$ of equation (5) we obtain
\[
\frac{d}{d\lambda}\left[\rho_1(\Omega)\right]=\frac{2a^4(a-2\lambda)}{3(4+a^2(a^2-a\lambda+\lambda^2))^2}
\]
Thus, the only critical point is when $\lambda=\frac{a}{2}$, when the $y$-coordinate of the vertex $\hat{C}$ is at the midpoint of our base.

To prove part (ii), we employ the same method, but use the formula from Proposition \ref{rho2def}.  After a lengthy calculation, we find a formula
\[
\frac{d}{d\lambda}\left[\rho_2(\Omega)\right]=\frac{P(\lambda)}{Q(\lambda)}
\]
for explicit polynomials $P$ and $Q$.  The polynomial $Q$ is always positive, so we will ignore that when finding critical points.  By inspection, we find that we can write
\[
P(\lambda)=(\lambda-a/2)S(\lambda),
\]
where $S(\lambda+a/2)$ is an even polynomial.  Again by inspection, we find that every coefficient of $S(\lambda+a/2)$ is negative, except the constant term, which is
\[
999a^{20}/64-93a^{16}-664a^{12}-5376a^8-9216a^4.
\]
Thus, if $0<a<t_*^{1/4}$, then this coefficient is also negative and therefore $S(\lambda+a/2)$ does not have any positive zeros (and therefore does not have any real zeros since it is an even function of $\lambda$).  If $a>t_*^{1/4}$, then $S(\lambda+a/2)$ does have a unique positive zero and it is easy to see that it is a local maximum of $\rho_2$ (and the zero of $P$ at $a/2$ is a local minimum of $\rho_2$).
\end{proof}

We remark that the number $t_*^{1/4}$ from Theorem \ref{steinertri} is approximately $1.86637\ldots$ and $\sqrt{3}=1.73205\ldots$, so Theorem \ref{steinertri} does not disprove Conjecture \ref{rhon}.  In Figure 4, we have plotted $\rho_2$ as a function of $\lambda$ when $a=1$.  We see the maximum is attained when $\lambda=1/2$.  In Figure 5, we have plotted $\rho_2$ as a function of $\lambda$ when $a=3$.  We see that $\lambda=3/2$ is a local minimum and the maximum is actually attained when $\lambda=3/2\pm0.86508\ldots$. 

    {    
    \begin{figure}[H] 
    \centering
    \includegraphics[scale=2.7]{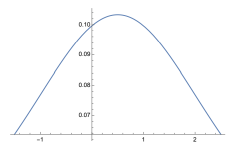}
    \caption{\textit{$\rho_2$ of a triangle with area $1$ and fixed side length $1$.}}
    \label{rho2-1.pdf}
\end{figure}}

    {    
    \begin{figure}[H] 
    \centering
    \includegraphics[scale=2.7]{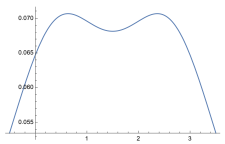}
    \caption{\textit{$\rho_2$ of a triangle with area $1$ and fixed side length $3$.}}
    \label{label7}
\end{figure}}


We can now prove the following corollary, which should be interpreted in contrast to Corollary \ref{nosteiner}.

\par\begin{corollary}\label{equimax1}
Given an arbitrary triangle of area 1, Steiner symmetrization performed parallel to one of the sides increases $\rho_1$. Consequently, the equilateral triangle is the unique maximum for $\rho_1$ among all triangles of fixed area. 
\end{corollary}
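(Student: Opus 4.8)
The plan is to derive both assertions of the corollary from Theorem~\ref{steinertri}(i) together with the explicit formula for $\rho_1$ of a triangle found in its proof. The first step is a purely geometric observation: if $\Omega$ is a triangle of area $1$ and $AB$ is a side with $|AB|=a$, then the Steiner symmetrization of $\Omega$ along fibers parallel to $AB$ is exactly the isosceles triangle of area $1$ with base $a$. To check this I would place $AB$ on a horizontal line and the opposite vertex at height $r=2/a$ (forced by the area constraint); a short computation shows that the horizontal slice of $\Omega$ at height $s\in[0,r]$ has length $a(1-s/r)$, with no dependence on the horizontal positions of the three vertices. Symmetrizing each slice about a common vertical line therefore produces the triangle with vertices $(-a/2,0)$, $(a/2,0)$, $(0,r)$, which is isosceles with base $a$ and area $1$.

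With this in hand the first assertion is immediate: $\Omega$ and its symmetrization $\Omega^*$ both lie in the family of area-$1$ triangles having a side of length $a$, and by Theorem~\ref{steinertri}(i) the triangle $\Omega^*$ maximizes $\rho_1$ over that family, so $\rho_1(\Omega^*)\ge\rho_1(\Omega)$. The derivative computed in the proof of Theorem~\ref{steinertri}(i) shows that, in the one-parameter family there parametrized by $\lambda$, $\rho_1$ strictly increases for $\lambda<a/2$ and strictly decreases for $\lambda>a/2$, so the inequality is strict unless $\Omega$ is already congruent to $\Omega^*$.

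For the second assertion I would first reduce to isosceles triangles: by scaling we may fix area $1$, every area-$1$ triangle has a side of some length $a>0$, and by Theorem~\ref{steinertri}(i) it is dominated in $\rho_1$ by the isosceles triangle with base $a$. Hence it suffices to maximize, over $a>0$, the value of $\rho_1$ at the isosceles triangle with base $a$. Putting $\lambda=a/2$ in the formula $\rho_1(\Omega)=\tfrac{2a^2}{3(4+a^2(a^2-a\lambda+\lambda^2))}$ from the proof of Theorem~\ref{steinertri} gives this value as $h(a)=\tfrac{8a^2}{48+9a^4}$. Since $h'(a)$ has the sign of $16/3-a^4$, the function $h$ is strictly increasing and then strictly decreasing, with unique maximum at $a_*=(16/3)^{1/4}$. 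Finally, the area-$1$ equilateral triangle has side length exactly $(16/3)^{1/4}$, because $\tfrac{\sqrt3}{4}s^2=1$ forces $s^4=16/3$; so the isosceles triangle with base $a_*$ is the equilateral triangle. Combining the strict unimodality of $h$ in $a$ with the strict optimality of $\lambda=a/2$ within each fixed-side family shows that the equilateral triangle is the unique maximizer.

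All the computations here are elementary, and I do not anticipate a serious obstacle. The one point that needs a little care is the geometric claim in the first step --- in particular that the horizontal slice length is independent of the shear of the triangle, which is precisely what makes the symmetrized region a genuine triangle rather than a quadrilateral and hence what allows Theorem~\ref{steinertri}(i) to be invoked. One should also keep in mind that ``increases'' is meant here in the weak sense, with strict increase exactly when $\Omega$ is not already isosceles with the chosen side as its base.
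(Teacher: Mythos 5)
Your proposal is correct, and for the second assertion it takes a genuinely different (and more self-contained) route than the paper. The paper's own proof is a two-line argument: any triangle with two unequal sides can be strictly improved (by Theorem \ref{steinertri}(i)), and therefore a maximizer, \emph{if it exists}, must be equilateral; the existence of a maximizer is simply asserted. You instead avoid the existence question entirely: you reduce to isosceles triangles via Theorem \ref{steinertri}(i), substitute $\lambda=a/2$ into the explicit formula to get $h(a)=8a^2/(48+9a^4)$, and maximize this one-variable function directly, checking that the critical value $a_*=(16/3)^{1/4}$ is exactly the side length of the area-normalized equilateral triangle (and that the isosceles triangle with base $a_*$ is in fact equilateral). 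This buys you a complete proof with uniqueness for free, whereas the paper's version leaves a small gap --- one would still need to rule out maximizing sequences that degenerate (which, to be fair, follows easily from the formula, since $\rho_1\to 0$ as the triangle degenerates). You also supply the geometric verification that Steiner symmetrization of a triangle parallel to a side produces precisely the isosceles triangle with that base, which the paper leaves implicit but which is exactly what connects Theorem \ref{steinertri}(i) to the symmetrization statement in the corollary. All of your computations check out.
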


\begin{proof}
We saw in the proof of Theorem \ref{steinertri} that if a triangle has any two sides not equal, than we may transform it in a way that increases $\rho_1$.  The desired result now follows from the existence of a triangle that maximizes $\rho_1$.
\end{proof}

We can also consider a related problem of maximizing $\rho_N$ among all triangles with one fixed angle.  To this end, we formulate the following conjecture, which is analogous to Problem \ref{fixedbasen}.  If true, it would be an analog of results in \cite{Sol20} for the Bergman $N$-polynomial content.

\begin{conj}\label{fixedanglen}
For an $N\in\bbN$ and $\theta\in(0,\pi)$, the triangle with area $1$ and fixed interior angle $\theta$ that maximizes $\rho_N$ is the isosceles triangle with area $1$ and interior angle $\theta$ opposite the base.
\end{conj}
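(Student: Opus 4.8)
The plan is to imitate the proof of Theorem~\ref{steinertri}: normalize the one‑parameter family of competitors, express $\rho_N$ explicitly in terms of the parameter by means of the moment formulas of this section, and study its critical points. Fix $\theta\in(0,\pi)$ and place the apex carrying the interior angle $\theta$ at the origin, with the two incident edges pointing along $u_{\pm}=(\cos\tfrac\theta2,\pm\sin\tfrac\theta2)$ and having lengths $\mu e^{s}$ and $\mu e^{-s}$, where $\mu=\sqrt{2/\sin\theta}$. Then every member of the family has area $1$, the value $s=0$ is the isosceles triangle with base opposite $\theta$, and reflection in the $x$‑axis carries the triangle at parameter $s$ to the one at $-s$. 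Since $\rho_N$ is invariant under Euclidean isometries (for $z\mapsto\bar z$ this is because $\operatorname{dist}(\bar z,\mcp_N)=\operatorname{dist}(z,\overline{\mcp_N})$, which conjugates back to $\operatorname{dist}(\bar z,\mcp_N)$), the map $s\mapsto\rho_N(s)$ is an even function, so $s=0$ is automatically a critical point for every $N$; the conjecture asserts that it is the global maximum.

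\emph{The case $N=1$} can be settled outright. Translating to centroid $0$ by Proposition~\ref{centroidtri}, a direct computation shows that the centroidal second‑moment matrix $M$ (with entries $I_{2,0},I_{1,1},I_{0,2}$) of an area‑$1$ triangle equals $\tfrac1{36}\sum_{e}ee^{\mathsf T}$, the sum being over the three edge vectors $e$. Hence $\operatorname{tr}M=\tfrac1{36}(p^2+q^2+r^2)$ for the side lengths $p,q,r$, whereas $\det M=\tfrac1{36^2}\sum_{i<j}(e_i\wedge e_j)^2=\tfrac1{108}$ is the \emph{same constant for every area‑$1$ triangle}, because each $|e_i\wedge e_j|$ equals twice the area. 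Then \eqref{rho1form} reads $\rho_1=4\det M/\operatorname{tr}M=\tfrac{4}{3(p^2+q^2+r^2)}$, and the law of cosines gives $p^2+q^2+r^2=2(p^2+q^2)-2pq\cos\theta$, in which $pq=\mu^2$ is fixed while $p^2+q^2=2\mu^2\cosh 2s$ is strictly increasing in $|s|$. Therefore $\rho_1$ is strictly maximized at $s=0$, proving Conjecture~\ref{fixedanglen} for $N=1$; the same identity also reproves, via Heron's formula, that the equilateral triangle maximizes $\rho_1$ over all area‑$1$ triangles.

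\emph{The case $N=2$} should be handled exactly as in the proof of Theorem~\ref{steinertri}(ii): compute $I_{m,n}(s)$ for $m+n\le4$, substitute into the formula of Proposition~\ref{rho2def}, and differentiate in $s$. Evenness forces $\tfrac{d}{ds}\rho_2$ to vanish at $s=0$, and after dividing out that zero one must show the remaining factor has no real zero that is a local maximum — equivalently, that an explicit polynomial in $\cosh 2s$ with $\theta$‑dependent coefficients has a fixed sign. I expect this to be the main computational obstacle, and I expect the phenomenon of Theorem~\ref{steinertri}(ii) to reappear: this sign condition could fail for $\theta$ close to $0$ or $\pi$, so part of the task is to decide whether the conjecture holds for all $\theta\in(0,\pi)$ when $N=2$ or must be qualified there.

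\emph{The case $N\ge 3$} is the genuine difficulty, and it is structural rather than computational: there is no closed form for $\rho_N$, and by Corollary~\ref{nosteiner} the usual Steiner‑symmetrization argument is unavailable. The realistic options are an induction on $N$ built on $\rho_N=\rho_{N-1}-|\langle 1,wp_N\rangle|^2$ together with the recurrence for the Bergman orthonormal polynomials; a two‑point (``polarization'') symmetrization tailored to the anti‑conformal reflection across the angle bisector, which — unlike Steiner symmetrization — interacts well with analytic functions; or a limiting argument deducing the statement for large $N$ from the torsional rigidity itself. Producing a workable version of one of these in the presence of the fixed‑angle constraint is, in my view, the crux of the conjecture.
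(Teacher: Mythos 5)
The statement you are addressing is a \emph{conjecture} in the paper: the authors do not prove it, and the only support they offer is Theorem \ref{fixedangle}, which settles the cases $N=1$ and $N=2$ by the explicit-moment method. Measured against that, your proposal is sound and candid: you prove $N=1$ completely, you outline $N=2$ along exactly the paper's lines, and you correctly identify $N\ge3$ as the genuinely open part --- a gap your proposal shares with the paper itself rather than one you have introduced.

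Your $N=1$ argument is a genuinely different and, to my mind, better route than the paper's. The paper fixes the angle at a vertex, parametrizes by the adjacent side length $a$, computes $\rho_1(\Omega)=2a^2/(3a^4-6a^2\cot\theta+12\csc^2\theta)$ and differentiates. Your identity $\rho_1=4\det M/\operatorname{tr}M=4/(3(p^2+q^2+r^2))$, resting on $M=\tfrac1{36}\sum_e ee^{\mathsf T}$ and the affine invariance that forces $\det M=1/108$ for every area-$1$ triangle, is coordinate-free and reduces both extremal problems to minimizing $p^2+q^2+r^2$; it recovers Theorem \ref{steinertri}(i), Theorem \ref{fixedangle} for $N=1$, and Corollary \ref{equimax1} (via $p^2+q^2+r^2\ge 4\sqrt3\,$Area) in one stroke. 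I checked the constants and they are correct. Two smaller remarks. First, for $N=2$ the qualification you anticipate (possible failure for $\theta$ near $0$ or $\pi$) does not materialize: the paper's proof of Theorem \ref{fixedangle} asserts that all coefficients of the shifted polynomial $S_\theta$ are negative for every $\theta\in(0,\pi)$, so Descartes' rule applies uniformly and the fixed-angle problem behaves better than the fixed-base problem of Theorem \ref{steinertri}(ii) --- though the paper's own verification of those signs is by inspection and plotting, so your caution is not unreasonable. Second, your evenness-in-$s$ observation is the same symmetry the paper invokes when it notes that the real zeros of $S_\theta$ pair off; making that invariance explicit, as you do, is a clarity gain. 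For $N\ge3$ neither you nor the paper has an argument, and your diagnosis of why Steiner symmetrization is unavailable (Corollary \ref{nosteiner}) is exactly the obstruction the authors themselves emphasize.
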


The following theorem provides strong evidence that Conjecture \ref{fixedanglen} is true.

\begin{theorem}\label{fixedangle}
Among all triangles with area 1 and fixed interior angle $\theta$, the isosceles triangle with interior angle $\theta$ opposite the base maximizes $\rho_1$ and $\rho_2$.
\end{theorem}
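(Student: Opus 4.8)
The plan is to reduce the two-parameter family of triangles (angle $\theta$ fixed, area $1$) to a one-parameter family and then to a monotonicity statement. Since $\rho_1$ and $\rho_2$ are invariant under every rigid motion of the plane, we may place the vertex carrying the fixed angle at the origin with its two adjacent sides along the rays of argument $0$ and $\theta$, so that the triangle has vertices $A=(0,0)$, $B=(b,0)$, $C=(c\cos\theta,c\sin\theta)$ with $b,c>0$; the area-$1$ condition is $bc\sin\theta=2$, so $bc$ is a fixed positive constant. The isosceles triangle with $\theta$ opposite the base is precisely the configuration $b=c$, and interchanging $b$ and $c$ reflects the triangle across the bisector of the angle at $A$, so each $\rho_j$ is a symmetric function of $(b,c)$. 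Substituting $c=2/(b\sin\theta)$ expresses $\rho_j$ as a function of $b\in(0,\infty)$ alone (with $\theta$ a parameter), invariant under the involution $b\mapsto 2/(b\sin\theta)$ whose unique fixed point is $b=c$. It therefore suffices to show that, for each $\theta\in(0,\pi)$, $b=c$ is the only positive critical point of $b\mapsto\rho_j(b)$ and that the derivative there changes from positive to negative.

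To do this I would translate the triangle so that its centroid lies at the origin (introducing corrections $T_x,T_y$ exactly as in the proof of Theorem~\ref{steinertri}) and compute the required moments of area by direct double integration over the triangle, as is done repeatedly in the paper; for the second moments one may instead use $I_{2,0}=\tfrac1{12}\sum x_i^2$, $I_{0,2}=\tfrac1{12}\sum y_i^2$, $I_{1,1}=\tfrac1{12}\sum x_iy_i$ in terms of the centroid-centred vertices, which holds because the area is $1$. For $\rho_1$ this is short: substituting $I_{2,0},I_{0,2},I_{1,1}$ into \eqref{rho1form} gives $\rho_1$ as an explicit rational function of $b$, and the numerator of $\tfrac{d}{db}\rho_1$ factors as $(b-c)$ times an expression of constant sign for $b>0$ and all $\theta$; since $b-c=b-2/(b\sin\theta)$ is negative for $b<c$ and positive for $b>c$, this settles $\rho_1$.

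For $\rho_2$ the strategy is identical but much heavier. Proposition~\ref{rho2def} calls for the ten moments $I_{m,n}$ with $(m,n)\in\{(2,0),(0,2),(1,1),(3,0),(2,1),(1,2),(0,3),(4,0),(2,2),(0,4)\}$, each again a polynomial in $b,c,\cos\theta,\sin\theta$ obtained by direct integration. Plugging these into the formula of Proposition~\ref{rho2def} and simplifying gives $\rho_2$ as an explicit rational function of $b$; as in the proof of Theorem~\ref{steinertri}, the denominator of $\tfrac{d}{db}\rho_2$ is positive, so the critical points are the positive roots of the numerator $P(b)$. The reflection symmetry forces $P(c)=0$, so $P(b)=(b-c)\,R(b)$ for an explicit cofactor $R$, and everything comes down to showing that $R$ has no positive zero for any $\theta\in(0,\pi)$, with the appropriate sign. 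It is convenient to regard $R$, after clearing positive powers of $b$ and $\sin\theta$, as a polynomial in $v:=b^2+c^2$ (which ranges over $[4/\sin\theta,\infty)$) with coefficients that are polynomials in $\cos\theta$ and $\sin^2\theta$, and then to show this polynomial is of one sign throughout $v\ge 4/\sin\theta$. The natural hope, following the pattern of Theorem~\ref{steinertri}(ii), is that every coefficient is individually of a fixed sign on $(0,\pi)$; the difference from the fixed-side-length problem is that here no sign change is expected, so there is no threshold. Once $R$ is shown to have constant sign, $P(b)=(b-c)R(b)$ changes sign exactly once, at $b=c$, and a sign check identifies this as the maximum of $\rho_2$.

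The main obstacle is precisely this last point: proving, uniformly in the parameter $\theta$, that the cofactor $R$ never vanishes for $b>0$. If some coefficient of $R$ (as a polynomial in $v$) is not of fixed sign on $(0,\pi)$, one would group terms or invoke the constraint $b^2c^2=4/\sin^2\theta$ (equivalently the bound $v\ge 2bc$) to recover sign-definiteness; this is the only step requiring genuine care rather than bookkeeping. The whole $\rho_2$ computation --- the ten moments, the substitution into the long formula of Proposition~\ref{rho2def}, the differentiation, and the factorization of $P$ --- is best carried out with a computer algebra system.
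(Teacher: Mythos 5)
Your proposal is correct and follows essentially the same route as the paper: both reduce to the one-parameter family indexed by the length of one side adjacent to $\theta$ (your $b$ is the paper's $a$, with $c=2/(b\sin\theta)$ the other adjacent side), identify the isosceles critical point, and then rule out further positive critical points of $\rho_2$ by factoring the known root out of the derivative's numerator and establishing sign-definiteness of the cofactor's coefficients via computer algebra. The paper's only cosmetic difference is that it applies Descartes' rule of signs to $Q_\theta(a+\sqrt{2\csc\theta})$ rather than to your cofactor $R$ expressed in $v=b^2+c^2$, and it too leaves the coefficient-by-coefficient sign verification to (plotted) computation.
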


\begin{proof}
Let $\Omega$ be an area-normalized triangle with fixed interior angle $\theta$, centroid zero, and side length $a$ adjacent to our angle $\theta$. As $\rho_N$ is rotationally invariant, let us position $\Omega$ so that the side of length $a$ runs parallel to the $x$-axis. First, let us consider the triangle $\hat\Omega$ which is a translation of $\Omega$, so that the corner of $\hat\Omega$ with angle $\theta$, say vertex $A$, lies at the origin. Define $(T_x,T_y)$ as in \eqref{txty}. By translating the entire region $\hat\Omega$ by its centroid we attain the previously described region $\Omega$, now with centroid zero, as pictured in Figure 6.

{
\begin{figure}[H]
    \centering
    \includegraphics[scale=0.75]{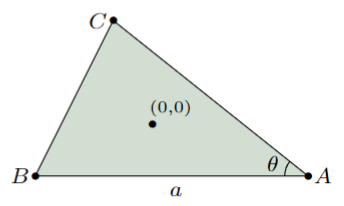}
    \caption{\textit{A triangle $\Omega$ with fixed angle $\theta$, variable side length $a$, area 1, and centroid zero.}}
    \label{Triangle4.png}
\end{figure}}

Our region $\Omega$ is now a triangle with centroid $0$ having vertices $A$, $B$, and $C$ given by
\begin{align*}
    A&=(-T_x,-T_y)\\
    B&=(-a-T_x,-T_y)\\
    C&=\left(\frac{-2}{a\tan\theta}-T_x,\frac{2}{a}-T_y\right)
\end{align*}
We can now use \eqref{rho1form} to calculate
\begin{equation}
\rho_1(\Omega)=\frac{2a^2}{3a^4-6a^2\cot\theta+12\csc^2\theta}
\end{equation}

By taking the first derivative with respect to $a$ of equation (4) we obtain 
\[
\frac{d}{da}\left[{\rho_1(\Omega)}\right]=\frac{-4a(a^4-4\csc^2\theta)}{3(a^4-2a^2\cot\theta+4\csc^2\theta)^2}
\] 
Thus, the only critical point of $\rho_1$ is $a=\sqrt{2\csc\theta}$ and this point is a local maximum. We conclude our proof by observing that $a=\sqrt{2\csc\theta}$ is the side length of the area-normalized isosceles triangle with interior angle $\theta$ opposite the base.

The calculation for $\rho_2$ follows the same basic strategy, albeit handling lengthier calculations.  In this case, we calculate
\[
\frac{d}{da}\left[{\rho_2(\Omega)}\right]=\frac{Q_{\theta}(a)}{P_{\theta}(a)}
\]
for explicitly computable functions $Q_{\theta}$ and $P_{\theta}$, which are polynomials in $a$ and have coefficients that depend on $\theta$.  The function $P_{\theta}(a)$ is positive for all $a>0$, so the zeros will be the zeros of $Q_{\theta}$.  One can see by inspection that $Q_{\theta}(\sqrt{2\csc\theta})=0$, so let us consider
\[
S_{\theta}(a)=Q_{\theta}(a+\sqrt{2\csc\theta}).
\]
Then $S(0)=0$ and there is an obvious symmetry to these triangles that shows the remaining real zeros of $S$ must come in pairs with a positive zero corresponding to a negative one.  Thus, it suffices to rule out any positive zeros of $S_{\theta}$.  This is done with Descartes' Rule of Signs, once we notice that all coefficients of $S_{\theta}$ are negative.  For example, one finds that the coefficient of $a^7$ in $S_{\theta}(a)$ is equal to
\[
-12288\csc^7\theta(140\cos(4\theta)-3217\cos(3\theta)+25010\cos(2\theta)-82016\cos(\theta)+70136)
\]
One can plot this function to verify that it is indeed negative for all $\theta\in[0,\pi]$.  Similar elementary calculations can be done with all the other coefficients in the formula for $S_{\theta}(a)$, but they are too numerous and lengthy to present here.

The end result is the conclusion that $a=\sqrt{2\csc\theta}$ is the unique positive critical point of $\rho_2$ and hence must be the global maximum, as desired.
 \end{proof}
 
 We can now prove the following result, which is the a natural follow-up to Corollary \ref{equimax1}.
 
 \par\begin{corollary}\label{equimax2}
The equilateral triangle is the unique maximum for $\rho_2$ among all triangles of fixed area. 
\end{corollary}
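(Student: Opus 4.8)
The plan is to run the argument behind Corollary~\ref{equimax1}, but with Theorem~\ref{fixedangle} in place of Theorem~\ref{steinertri}. This replacement is essential: Theorem~\ref{steinertri}(ii) only produces the isosceles optimizer when the fixed side satisfies $a\le t_*^{1/4}$, and a maximizer among all area-$1$ triangles could conceivably have a side longer than this, whereas the fixed-angle statement of Theorem~\ref{fixedangle} holds for \emph{every} $\theta\in(0,\pi)$ with no such restriction.

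First I would check that $\rho_2$ actually attains a maximum among area-$1$ triangles. Up to congruence these triangles are parametrized by a pair of angles ranging over a bounded open region, on which $\rho_2$ is continuous, so it is enough to show that $\rho_2\to0$ along any family whose smallest interior angle tends to $0$. Since $\rho_2(\Omega)=\operatorname{dist}_{L^2(\Omega,dA)}(\bar z,\mcp_2)^2$, since $z\in\mcp_2$, and since $\rho_2$ is rotation- and translation-invariant, we may place the longest side of $\Omega$, of length $L$, on the real axis, so that $\Omega$ lies in the strip $\{|y|\le 2/L\}$; then $\rho_2(\Omega)\le\int_\Omega|\bar z-z|^2\,dA=4\int_\Omega y^2\,dA\le 16/L^2$. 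An area-$1$ triangle all of whose sides have length at most $L$ has smallest angle at least $\arcsin(2/L^2)$, so a degenerating family forces $L\to\infty$ and hence $\rho_2\to0$. Since $\rho_2$ is positive on the equilateral triangle, a maximizer exists.

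Next, let $T$ be such a maximizer and let $\theta$ be any interior angle of $T$. By Theorem~\ref{fixedangle}, the unique $\rho_2$-maximizer among area-$1$ triangles having an interior angle $\theta$ is the isosceles triangle $I_\theta$ with $\theta$ opposite the base (uniqueness follows from the proof of that theorem, in which $a=\sqrt{2\csc\theta}$ is the only positive critical point). Hence $T\cong I_\theta$, so the multiset of angles of $T$ equals $\{\theta,(\pi-\theta)/2,(\pi-\theta)/2\}$, and this holds for \emph{each} interior angle $\theta$ of $T$. If $T$ is not equilateral it has an angle $\theta$ of multiplicity one; applying the identity to that $\theta$ shows $T$ has angle multiset $\{\theta,\phi,\phi\}$ with $\phi:=(\pi-\theta)/2\ne\theta$, and then applying the same identity to the repeated angle $\phi$ yields a contradiction. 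Therefore every maximizer is equilateral, and since equilateral triangles of a given area are mutually congruent and $\rho_2$ is congruence-invariant, the equilateral triangle is the unique maximizer.

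The only step calling for real care is the existence of a maximizer: the paper has just shown (Theorem~\ref{nomax}) that $\rho_2$ need not attain a maximum over a family of polygons, so excluding degeneration is not automatic, and that is exactly what the thin-strip estimate above accomplishes. Everything after existence is routine bookkeeping with angle multisets, whose sole nontrivial ingredient is the uniqueness built into Theorem~\ref{fixedangle}.
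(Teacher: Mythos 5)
Your proof is correct and follows essentially the same route as the paper's: combine Theorem~\ref{fixedangle} (the unique fixed-angle optimizer is isosceles) with the existence of a maximizer to force all sides equal. The one genuine addition is your thin-strip estimate $\rho_2(\Omega)\le 4\int_\Omega y^2\,dA\le 16/L^2$ establishing that a maximizer exists, a step the paper asserts without proof and which is worth making explicit given that Theorem~\ref{nomax} shows such existence can fail for other polygon classes.
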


\begin{proof}
We saw in the proof of Theorem \ref{fixedangle} that if a triangle has any two sides not equal, than we may transform it in a way that increases $\rho_2$.  The desired result now follows from the existence of a triangle that maximizes $\rho_2$.
\end{proof}

The same proof shows that Corollary \ref{equimax1} is also a corollary of Theorem \ref{fixedangle}.

\section{Numerics on Torsional Rigidity for Pentagons}\label{pentnum}

Here we present numerical evidence in support of P\'olya's conjecture for pentagons.  In particular, we will consider only equilateral pentagons and show that in this class, the maximizer of torsional rigidity must be very close to the regular pentagon (see \cite{CCGINPT21} for another computational approach to a similar problem).  Our first task is to show that to every $\theta,\phi\in(0,\pi)$ satisfying
\begin{align*}
(1-\cos(\theta)-\cos(\phi))^2+&(\sin(\theta)-\sin(\phi))^2\leq 4,\\
\cos(\theta)&\leq1-\cos(\phi),
\end{align*}
there exists a unique equilateral pentagon of area $1$ with adjacent interior angles $\theta$ and $\phi$ (where the uniqueness is interpreted modulo rotation, translation, and reflection).

{\begin{figure}[H]
    \centering
    \includegraphics[scale=0.75]{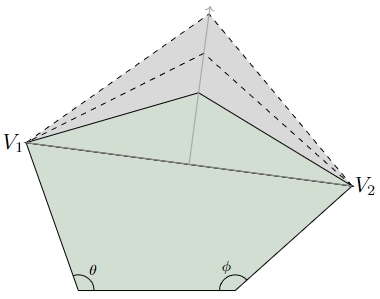}
    \caption{\textit{A pentagon constructed with vertices $V_1$, $V_2$, and interior angles $\theta$, $\phi$ as described below. There is exactly one point on the perpendicular bisector of $\overline{V_1V_2}$ for which our pentagon is equilateral.}}
    \label{Pentagon.png}
\end{figure}}

To see this, construct a pentagon with one side being the interval $[0,1]$ in the real axis.  Form two adjacent sides of length $1$ with interior angles $\phi$ and $\theta$ by choosing vertices $V_1=(\cos(\theta),\sin(\theta))$ and $V_2=(1-\cos(\phi),\sin(\phi))$.  Our conditions imply that $V_1$ lies to the left of $V_2$ and the distance between $V_1$ and $V_2$ is less than or equal to $2$.  Thus, if we join each of $V_1$ and $V_2$ to an appropriate point on the perpendicular bisector of the segment $\overline{V_1V_2}$, we complete our equilateral pentagon with adjacent angles $\theta$ and $\phi$ (see Figure 7).  Obtaining the desired area is now just a matter of rescaling.

Using this construction, one can write down the coordinates of all five vertices, which are simple (but lengthy) formulas involving basic trigonometric functions in $\theta$ and $\phi$.  It is then a simple matter to compute a double integral and calculate the area of the resulting pentagon, rescale by the appropriate factor and thus obtain an equilateral pentagon with area $1$ and the desired adjacent internal angles.  One can then compute $\rho_N$ for arbitrary $N\in\bbN$ using the method of \cite{FS19} to estimate the torsional rigidity of such a pentagon.

Theoretically, this is quite simple, but in practice this is a lengthy calculation.  We were able to compute $\rho_{33}(\Omega)$ for a large collection of equilateral pentagons $\Omega$.  Note that all interior angles in the regular pentagon are equal to $108$ degrees.  We discretized the region $\theta,\phi\in[105,110]$ (in degrees) and calculated $\rho_{33}$ for each pentagon in this discretization.  The results showed a clear peak near $(\theta,\phi)=(108,108)$, so we further discretized the region $\theta,\phi\in[107.5,108.5]$ (in degrees) into $400$ equally spaced grid points and computed $\rho_{33}$ for each of the $400$ pentagons in our discretization.  We then interpolated the results linearly and the resulting plot is shown as the orange surface in Figure 8.

{
\begin{figure}[H] 
    \centering
    \includegraphics[scale=0.75]{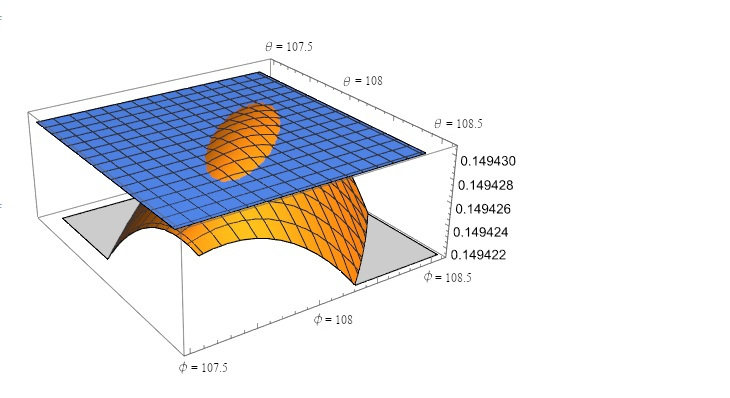}
    \caption{\textit{$\rho_{33}$ for a selection of equilateral pentagons with area $1$ having angles close to those of the regular pentagon.}}
    \label{2a.jpg}
\end{figure}}

The blue surface in Figure 8 is the plane at height 0.149429, which is the (approximate) torsional rigidity of the area normalized regular pentagon calculated by Keady in \cite{Keady}.  Recall that every $\rho_N$ is an overestimate of $\rho$, so any values of $\theta$ and $\phi$ for which $\rho_{33}$ lies below this plane will not be the pentagon that maximizes $\rho$.  Thus, if we take the value 1.49429 from \cite{Keady} as the exact value of the torsional rigidity of the regular pentagon with area $1$, we see that among all equilateral pentagons, the maximizer of $\rho$ will need to have two adjacent angles within approximately one third of one degree of $108$ degrees.  This is extremely close to the regular pentagon, and of course the conjecture is that the regular pentagon is the maximizer.

\bigskip

\noindent\textbf{Acknowledgements.}  The second author graciously acknowledges support from the Simons Foundation through collaboration grant 707882.

\end{document}